\theoremstyle{plain}
\newtheorem{thm}{Theorem}
\newtheorem{lma}{Lemma}
\newtheorem{crl}{Corollary}
\newtheorem{cnj}{Conjecture}
\title{Perturbation Effects on Word Lengths in Three-Reflection Symmetric Presentations of Dihedral Groups}
\author{Michael P. Allocca} 
\address{Department of Mathematics, Computer Science, and Statistics
Muhlenberg College, 
Allentown, PA, 18104 USA}
\email{michaelallocca@muhlenberg.edu}
\author{Max F. Trimmer} 
\address{Department of Mathematics, Computer Science, and Statistics
Muhlenberg College, 
Allentown, PA, 18104 USA}
\email{mtrimmer@muhlenberg.edu}
\begin{document}

\maketitle

\begin{abstract}
We investigate the properties of word lengths of elements from a three-reflection symmetric generating set of the dihedral group $D_n$. Specifically, we provide the upper bound $\lambda_1(D_n,S) \leq \lfloor\frac{n}{2}\rfloor + 1$ for a quantity $\lambda_1$ defined in \cite{MoultonSteel2012}, which measures the stability of a finitely presented group under perturbations in the words corresponding to certain elements with respect to specific presentations. This quantity has been of recent interest due to its role in the application of group theory to computational genomics, and we aim to introduce techniques in additive combinatorics to its discourse.\\

\noindent Keywords: Dihedral Groups, Group Presentations, Word Length, Additive Combinatorics
\end{abstract} 

\section{Introduction}\label{intro}
Given a finitely presented group, $G$, there is a well-established notion of `word length' of any of its elements using a minimal length expression by elements of a generating set.  A symmetric generating set of $G$ is a subset $S \subseteq G$ where any element of $G$ can be written as a product of elements in $S$ (e.g. as a `word'), $e \notin S$, and for all $s_i \in S$, $s_i^{-1} \in S$. For any element $g \in G$, we define the length of $g$ with respect to $S$, $l_S(g)$, to be the minimal number of elements of $S$ (e.g. `letters') for which $g$ can be written as a product of these elements.  The `symmetric' requirement that $s_i^{-1} \in S$ ensures that $l_S(g) = l_S(g^{-1})$.  Further, $l_S(g) = 0 \Leftrightarrow g=e$ and  $l_S(g) = 1 \Leftrightarrow g \in S$.\\  

This notion of word length is often used in the context of a distance metric between two elements of $G$ that is geometrically related to distances among vertices on $\Gamma (G,S)$, the Cayley graph for $G$ associated with the symmetric generating set $S$.  Specifically, for $g,g' \in G$, one may define the distance between $g$ and $g'$ in $ \Gamma (G,S)$, $d_S(g,g')$, as the smallest $k$ such that $g' = gs_1s_2 \cdots s_k$, where $s_i \in S \; \forall\;  i \in \{1,2,\cdots k\}$.  Intuitively, this is the smallest number of edges that may connect the vertices corresponding to $g$ and $g'$ in  $\Gamma (G,S)$ (e.g. the usual `shortest path' metric).  We may use this metric in equivalent word length language by noting that $l_S(g)$ is simply the distance between $g$ and the identity element in the Cayley graph. Further, by the minimality of $k$, $d_S(g,g') =l_S(s_1s_2 \cdots s_k) =  l_S(g^{-1}g')$.  \\

In contemporary literature, these definitions have been useful in computational approaches to genome rearrangements.  For a broad background of how these notions, along with permutation groups and other combinatorial methods, may inform this type of modeling in molecular biology, see \cite{MoultonSteel2012,BafnaPevzner1996,Bergeron2005,EgriNagy2014,Hannenhalli1999}.  Of particular interest is the approach taken in \cite{MoultonSteel2012}, where distance metrics on symmetric groups were used as a tool in modeling ``evolutionary distances'' among organisms based on rearrangements in their respective genomes.  Specifically, if $G=S_n$ is the symmetric group of order $n!$, and $S$ is the usual symmetric generating set consisting of transpositions of adjacent elements (reversals), then distances on the associated Cayley graph can be used to model the shortest number of reversals of gene segments (which are rare evolutionary events) that may connect one organism to another on a phylogenetic tree. In pursuit of these goals, the following was defined in \cite{MoultonSteel2012}:

\begin{align*}
  \lambda_{1}(G,S)&:=  \max_{g\in G,\ s\in S}\{l_{S}(gsg^{-1})\} 
\end{align*}

It is worth noting that $l_{S}(gsg^{-1})= l_{S}((gsg^{-1})^{-1})=l_{S}(g^{-1}s^{-1}(g^{-1})^{-1}) = l_{S}(g^{-1}sg) = d_S(g,sg)$.  So, more intuitively, this quantity measures the effect of a deletion of one letter on word lengths of elements in $G$ which, in \cite{MoultonSteel2012}, served as a way to model the maximum effect a single reversal can have on the distance between the vertices corresponding to two organisms (e.g. the so-called ``butterfly effect'').  Alternatively, one may view this as a measure of `sensitivity' to paths along a Cayley graph following the omission of one generator. This can be viewed as a group theoretic analog to stability in nonlinear dynamical systems. \\

For these reasons, $\lambda_{1}(G,S)$ has been of interest for other choices of $G$ and $S$.  Recently, in \cite{AlloccaEtAl} these notions have been explored in the context where $G = D_n$, the dihedral group of order $2n$, and $S$ is a symmetric generating set with two or three elements.  This investigation required examining all possible symmetric presentations of dihedral groups generated by sets with two or three elements. The latter was a novel classification that explored the relationship between word lengths and whether each of the three elements of $S$ are rotations or reflections. Upper bounds for $\lambda_{1}$ were established for all cases where $S$ has three elements, except when all of these elements are reflections.  In the three-reflection case, where $r$ denotes the usual order $n$ rotation and $f$ is the involution element representing the standard reflection in $D_n$, the following was proved:
\begin{thm}\label{3flip}
Consider the dihedral group $D_{n}$ with $n \geq 2$. Then there exists a generating set of the form $S=\{f,r^{a}f,r^{b}f\}$ such that $\lambda_{1}(D_n,S) \leq \lfloor \frac{n}{2} \rfloor  + 1$, where $a\neq b$ and $a,b \geq1$.
\end{thm}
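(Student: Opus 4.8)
The plan is to prove the existence statement by exhibiting one explicit generating set, namely $S=\{f,rf,r^{2}f\}$ (so $a=1$, $b=2$), and verifying the bound for it; since the theorem only asserts existence, a single good choice suffices. First I would check that this is a valid symmetric three-reflection generating set for $n\geq 3$: the three reflections are distinct, each equals its own inverse, and $\langle S\rangle=D_{n}$ because $(rf)f=r$ already recovers the rotation $r$, whence $S$ generates everything. The degenerate case $n=2$, where $D_{2}$ is abelian so conjugation is trivial and $\lambda_{1}=1$, I would dispose of separately.

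Next I would reduce the computation of $\lambda_{1}$ to a word-length estimate over reflections only. Using $fr=r^{-1}f$ one checks that every conjugate of a reflection in $D_{n}$ is again a reflection, explicitly $r^{i}(r^{c}f)r^{-i}=r^{2i+c}f$ and $(r^{j}f)(r^{c}f)(r^{j}f)=r^{2j-c}f$. Hence for each generator $s$ and each $g\in D_{n}$ the element $gsg^{-1}$ lies among the reflections $\{r^{m}f:0\leq m\leq n-1\}$, so $\lambda_{1}(D_{n},S)\leq \max_{0\leq m\leq n-1} l_{S}(r^{m}f)$, and it remains to bound the right-hand side.

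The core step is a word-length formula obtained from the identity $(r^{c_{1}}f)(r^{c_{2}}f)\cdots(r^{c_{L}}f)=r^{\,c_{1}-c_{2}+\cdots+(-1)^{L-1}c_{L}}\,f^{L}$ with each $c_{i}\in\{0,1,2\}$. A reflection forces $L$ odd, giving $\lceil L/2\rceil$ plus-slots and $\lfloor L/2\rfloor$ minus-slots, so representing $r^{m}f$ amounts to writing $m$ modulo $n$ as a signed sum of coins drawn from $\{1,2\}$, where the zero-value letter $f$ pads unused slots and selects the sign pattern. I would bound $l_{S}(r^{m}f)$ by two explicit words: a forward word placing $\lceil m/2\rceil$ coins summing to $m$ in plus-slots, of length $2\lceil m/2\rceil-1$, and a backward word placing $\lceil (n-m)/2\rceil$ coins summing to $n-m$ in minus-slots, of length $2\lceil (n-m)/2\rceil+1$ (whose net contribution is $-(n-m)\equiv m$). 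Taking whichever is shorter and using $2\lceil x/2\rceil\leq x+1$, the forward word handles $m\leq\lfloor n/2\rfloor+1$ with length at most $m$, while the backward word handles the remaining $m$ with length at most $(n-m)+2\leq\lceil n/2\rceil$; in both regimes the length is at most $\lfloor n/2\rfloor+1$, which yields the claim.

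The main obstacle I anticipate is parity bookkeeping rather than any deep idea: one must keep $L$ odd, track the asymmetry between the $\lceil L/2\rceil$ plus-slots and the $\lfloor L/2\rfloor$ minus-slots, and confirm that inserting $f$'s genuinely realizes the claimed sign assignments without exceeding the stated slot counts (here the forward and backward constructions happen to fill their slot budgets exactly). The delicate point is the crossover near $m\approx n/2$, where both candidate words have length close to $\lfloor n/2\rfloor+1$, so the off-by-one constants in the ceiling estimates must be checked carefully to ensure the bound is never exceeded.
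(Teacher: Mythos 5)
Your proof is correct, but it is worth noting that this paper does not actually prove Theorem~\ref{3flip} itself: the statement is imported from \cite{AlloccaEtAl}, and the paper's own contribution is the strictly stronger Corollary~2, which establishes the bound $\lambda_1(D_n,S)\leq\lfloor n/2\rfloor+1$ for \emph{every} three-reflection generating set via an additive-combinatorics growth argument (projecting the word sets $W_l$ into $\mathbb{Z}_n$, showing $|W_l'|\geq\min(2l+1,n)$ by induction using Kneser's theorem, and then applying the pigeonhole principle to the reflections). Your route is genuinely different and more elementary: you fix the single set $S=\{f,rf,r^2f\}$, reduce $\lambda_1$ to the maximum word length of a reflection (valid, since every conjugate of a reflection is a reflection), and then use the alternating-sign identity $(r^{c_1}f)\cdots(r^{c_L}f)=r^{c_1-c_2+\cdots+c_L}f$ to turn the problem into writing $m$ or $-(n-m)$ as a signed sum of coins from $\{0,1,2\}$, exhibiting explicit words of length $2\lceil m/2\rceil-1$ and $2\lceil(n-m)/2\rceil+1$ and taking the shorter one; the crossover arithmetic checks out, and the degenerate cases ($m=0$, $n=2$) are handled. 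What your approach buys is an explicit, constructive witness with concrete minimal-length expressions (closely paralleling the paper's later sharpness analysis in Section~4, where essentially the same set with $S'=\{-1,0,1\}$ is studied and $W_k'=\{-k,\ldots,k\}$ is computed directly); what it does not buy is the universal quantifier over all three-reflection sets, which is exactly where the paper needs Kneser's theorem to control the growth of $|W_l'|$ without knowing $a$ and $b$. As a proof of the existence statement as literally quoted, your argument is complete and self-contained.
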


Although this established the \textit{existence} of a three-reflection generating set for which the stated bound on $\lambda_{1}(G,S)$ is met, in \cite{AlloccaEtAl} it was also conjectured that this serves as a bound for \textbf{all} symmetric generating sets consisting of three reflections as follows.  

\begin{cnj}\label{3flipconj}
For a generating set $S$ composed of three involutions, none of which belong to the order $n$ cyclic subgroup of $D_n$, we conjecture that $\lambda_{1}(D_{n},S) \leq \lfloor \frac{n}{2} \rfloor  + 1$.
\end{cnj}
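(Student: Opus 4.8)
The plan is to first convert the conjecture into a purely combinatorial statement about distances in a circulant Cayley graph, and then prove a covering estimate. Write $S=\{r^{a_1}f,\,r^{a_2}f,\,r^{a_3}f\}$ and set $d_1=a_2-a_1$, $d_2=a_3-a_1$. Since $fr^k=r^{-k}f$, every product of two generators is a rotation, $(r^{a_i}f)(r^{a_j}f)=r^{a_i-a_j}$, so an even-length word equals a rotation $r^s$ with $s$ a sum of the nonzero differences
\[
 \delta\in\{\pm d_1,\ \pm d_2,\ \pm(d_1-d_2)\},
\]
and a word of length $2k$ realizes exactly the sums of $k$ such differences. Writing $m(y)$ for the least number of these differences summing to $y\pmod n$, a minimal odd word for a reflection $r^tf$ is a single generator $r^{a_i}f$ followed by a minimal even word for the rotation $r^{a_i-t}$, giving the exact formula
\[
 l_S(r^tf)=1+2\min_{i\in\{1,2,3\}}m(a_i-t).
\]

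Next I would use this to bound $\lambda_1$. A conjugate $g\,(r^{a_i}f)\,g^{-1}$ is always a reflection, so $\lambda_1(D_n,S)$ is the maximum of $l_S(r^tf)$ over the reflection exponents $t$ that occur as conjugates; enlarging the range to \emph{all} $t$ only weakens the estimate, which is harmless. Observe that $m(a_i-t)=m(t-a_i)$ is exactly the graph distance from $t$ to $a_i$ in the circulant Cayley graph of $\mathbb{Z}_n$ with connection set $\{\pm d_1,\pm d_2,\pm(d_1-d_2)\}$, so $\min_i m(a_i-t)$ is the distance from $t$ to the center set $A=\{a_1,a_2,a_3\}$. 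Hence
\[
 \lambda_1(D_n,S)\le 1+2\,\max_{t}\min_i m(a_i-t)=1+2\,\rho(A),
\]
where $\rho(A)$ is the covering radius of $A$. Since $2\lfloor n/4\rfloor\le\lfloor n/2\rfloor$ for every $n$, the conjecture follows once I show $\rho(A)\le\lfloor n/4\rfloor$; that is, the three balls of radius $\lfloor n/4\rfloor$ about $a_1,a_2,a_3$ cover $\mathbb{Z}_n$. Note the centers are \emph{mutually adjacent} (their pairwise differences $d_1$, $d_2$, $d_1-d_2$ all lie in the connection set), which is the geometric reason three suitably large balls suffice.

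For the covering estimate I would pass to the hexagonal-lattice model. The map $\phi(p,q)=pd_1+qd_2$ sends $\mathbb{Z}^2$ onto $\mathbb{Z}_n$, the connection set is the image of the six unit vectors $\pm(1,0),\pm(0,1),\pm(1,-1)$, and a direct computation gives the hexagonal word norm $\|(p,q)\|=\max(|p|,|q|,|p+q|)$, so that $m(y)=\min\{\|(p,q)\|:\phi(p,q)\equiv y\}$, with the centers corresponding to $(0,0),(1,0),(0,1)$. The crucial structural input is that the relation lattice $L=\ker\phi$ (of index $n$) contains \emph{no} vector of norm $1$: the six norm-$1$ vectors map to $d_1,d_2,d_1-d_2$ and their negatives, all nonzero in $\mathbb{Z}_n$ precisely because $a_1,a_2,a_3$ are distinct. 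Thus the minimum of $L$ is at least $2$, and this is exactly where the hypothesis enters. When $L$ is balanced (both successive minima of order $\sqrt n$) the covering radius is $O(\sqrt n)\ll\lfloor n/4\rfloor$ and there is nothing to do; the extremal regime is a skew $L$ whose shortest vector is short, which reproduces the tight configuration $d_1=1,d_2=2$ already implicit in Theorem \ref{3flip}.

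The main obstacle is the skew case, and I would handle it by projecting along a shortest vector $v\in L$, of norm $s\ge 2$. Quotienting $\mathbb{Z}^2$ by $\langle v\rangle$ turns the two-dimensional covering problem into a one-dimensional one in which $L$ becomes a rank-one sublattice and the three centers become three marked residues; the induced gap between consecutive fibers is controlled by $s$, and $s\ge 2$ is what forces the largest gap down to $\lfloor n/4\rfloor$ rather than $\lfloor n/2\rfloor$. The delicate points I expect to have to nail down are (i) that the projection of a hexagonal ball is genuinely an interval of the predicted length, (ii) the exact floor, which will likely require separating $s=2$ from $s\ge 3$ and tracking the parity of $n$, and (iii) showing that the three mutually adjacent centers never leave a gap larger than $\lfloor n/4\rfloor$ even when $v$ is aligned with a coordinate direction. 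Establishing (i)--(iii) rigorously---rather than the reduction, which is routine---is the real content of the proof.
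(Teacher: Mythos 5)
Your reduction is correct and is a genuinely different reformulation from the paper's: the identity $l_S(r^tf)=1+2\min_i m(a_i-t)$, where $m$ is the word metric on $\mathbb{Z}_n$ with connection set $\{\pm d_1,\pm d_2,\pm(d_1-d_2)\}$, does convert the conjecture into the covering statement $\rho(\{a_1,a_2,a_3\})\le\lfloor n/4\rfloor$, and $2\lfloor n/4\rfloor\le\lfloor n/2\rfloor$ closes the loop. The problem is that this covering estimate is never proved, and you say so yourself: ``Establishing (i)--(iii) rigorously \ldots is the real content of the proof.'' What you have relocated, not removed, is exactly the paper's Theorem \ref{minBound}: the statement that the three balls of radius $k$ about the mutually adjacent centers cover at least $\min(4k+3,n)$ residues is precisely the growth bound $|W_{2k+1}|\ge\min(2(2k+1)+1,n)$, which the paper proves by induction using Kneser's theorem and a case analysis on $\mathrm{Stab}(W_l')$. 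Until the hexagonal-lattice covering argument is carried out, the proposal proves nothing beyond the (easy and correct) translation.

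Two further cautions about the plan itself. First, the bound $\rho(A)\le\lfloor n/4\rfloor$ is \emph{tight} (e.g.\ $S'=\{0,1,2\}$ or $\{0,1,-1\}$ with $n\equiv 0\bmod 4$ gives $\rho=n/4$ exactly), so the ``balanced lattice gives $O(\sqrt n)\ll\lfloor n/4\rfloor$, skew lattice handled by projection'' dichotomy cannot afford any loss in the skew and intermediate regimes; the projection of a hexagonal ball onto $\mathbb{Z}^2/\langle v\rangle$ is generally not an interval of exactly the length you need, and the fiber-gap bookkeeping must come out to the exact floor. Second, minor but worth fixing: a word of length $2k$ realizes sums of \emph{at most} $k$ nonzero differences (adjacent equal letters cancel), not exactly $k$; this does not affect your formula for $l_S$ but should be stated correctly. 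As it stands the submission is a plausible program with the decisive lemma missing, whereas the paper's Kneser-based induction supplies a complete proof of the equivalent statement.
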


In this paper, we will prove this conjecture and introduce an alternative strategy of studying $\lambda_{1}(G,S)$ using techniques in additive combinatorics.  We will also show how 3-reflection presentations can be used to write elements of the Dihedral group with shorter word length than is possible for presentations containing two generators.

\section{A bound for prime $n$}
We first aim to establish a smaller bound than the one listed in Conjecture \ref{3flipconj} in the special case where $n$ is prime, subject to a few restrictions on what $S$ can be. Given that $D_n = \{ e, r, r^2, \cdots r^{n-1}, f, rf, r^2f, \cdots r^{n-1}f\}$, we adopt the standard nomenclature in considering the $n$ elements of the form $r^i$ to be all of the rotations and the $n$ elements of the form $r^if$ to be all of the reflections, where the $n$ rotations form a cyclic subgroup generated by $r$. Further information can be found in \cite{DummitFoote2004} regarding dihedral groups, which can also be seen as a special class of Coxeter groups \cite{Humphreys1990}. Unless specified otherwise, $S= \{f,r^af,r^bf\}$.  In \cite{AlloccaEtAl}, it was shown that if $H_1=\left\langle r^a \right\rangle$ and $H_2=\left\langle r^b \right\rangle$, then $S$ generates $D_{n}$ if and only if $H_{1}H_{2} = \{h_{1}h_{2}\mid \ h_{i}\in H_{i}\} = \langle r\rangle = \{1,r,\ldots,r^{n-1}\}$.  Further, $S$ is equivalent to the generating set for any three-reflection presentation, $S' = \{r^if',r^jf',r^kf'\}$, by simply labeling the first reflection in the set $f$.  That is, it will suffice to consider $S$ as defined above because the results in this paper will apply to $S'$ by letting $f=r^if', a = j-i,$ and $b = k-i$.\\

Extending the notation introduced in Section \ref{intro}, let $W_l$ be the set of elements of $D_n$ that can be written (not necessarily minimally) as a product of $l$ elements of $S$.  Note that if $l_S(g) = k$, then $g \in W_k$. However the converse need not be true, as a word expressed as a product of $k$ letters in $S$ need not be a word of length $k$. It is also worth noting that if $g \in W_k$, then $l_S(g) \leq k$. For example, if $g=r^af \cdot f \cdot f$, then $g \in W_3$ and its length is no greater than $3$. However, $l_s(g) =1$ since $g=r^a \cdot e = r^a$ is its minimal expression (and thus $g \in W_1$ as well). 



\begin{lma}
An element of $D_n$ can be written as a product of an odd number of elements in $S$ if and only if it is a reflection. An element of $D_n$ can be written as a product of an even number of elements in $S$ if and only if it is a rotation.
\end{lma}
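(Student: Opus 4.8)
The plan is to prove both biconditionals by tracking the parity of word length against the rotation/reflection dichotomy, exploiting the fact that every generator in $S = \{f, r^af, r^bf\}$ is a reflection. The key structural observation is that $D_n$ splits into two cosets: the rotation subgroup $\{e, r, \dots, r^{n-1}\}$ and the reflection coset $\{f, rf, \dots, r^{n-1}f\}$. I would first establish the closure behavior of products: a reflection times a reflection is a rotation, and a reflection times a rotation is a reflection. Concretely, for generators $r^if$ and $r^jf$ one computes $(r^if)(r^jf) = r^{i-j}$ using the defining relation $frf = r^{-1}$ (equivalently $fr = r^{-1}f$), so any product of two reflections lands in the rotation subgroup.

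The core of the argument is a straightforward induction on word length $l$. First I would prove the forward implications jointly: every word of odd length spells a reflection and every word of even length spells a rotation. The base cases are $l = 0$, where the empty word gives $e$ (a rotation, even), and $l = 1$, where each generator is by definition a reflection (odd). For the inductive step, a word of length $l$ is a generator (a reflection) times a word of length $l-1$; by the parity-flipping multiplication rule established above, left-multiplying by a reflection toggles the element between the rotation coset and the reflection coset, exactly matching the toggle in the parity of $l$. This gives: even length $\Rightarrow$ rotation and odd length $\Rightarrow$ reflection.

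For the reverse implications I would argue contrapositively using the forward direction together with the fact that the two cosets are disjoint. Suppose an element $g$ is a reflection but could be written with even length $l$; then by the forward direction $g$ would be a rotation, contradicting disjointness of the cosets (since no element is both a rotation and a reflection). Hence a reflection can only be written with odd length, and symmetrically a rotation only with even length. Combining these with the forward directions yields all four desired biconditionals. One should note that $S$ generates all of $D_n$, so every element genuinely admits some expression, which guarantees the statements are non-vacuous; this follows from $S$ being a generating set by hypothesis.

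The main obstacle, though minor, is making the parity bookkeeping fully rigorous rather than merely intuitive: the cleanest formulation is to define a parity homomorphism $\varphi \colon D_n \to \mathbb{Z}/2\mathbb{Z}$ sending rotations to $0$ and reflections to $1$, verify it is a well-defined group homomorphism (which amounts to exactly the coset-multiplication rules above), and observe that each generator maps to $1$. Then the image of any word of length $l$ under $\varphi$ is $l \bmod 2$, and the biconditionals follow immediately because $\varphi^{-1}(1)$ is precisely the set of reflections and $\varphi^{-1}(0)$ the rotations. I expect the homomorphism framing to be the most efficient route, collapsing the induction into a single functoriality statement, so the only real care needed is confirming $\varphi$ respects the group operation on all four combinations of rotation/reflection inputs.
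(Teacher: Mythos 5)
Your proposal is correct and takes essentially the same route as the paper: both arguments rest on the observation that a product of an even number of reflections is a rotation and an odd number is a reflection, with the converse obtained from the disjointness of the rotation subgroup and the reflection coset together with the fact that $S$ generates $D_n$. Your homomorphism $\varphi \colon D_n \to \mathbb{Z}/2\mathbb{Z}$ is just a cleaner packaging of the same parity bookkeeping the paper carries out informally.
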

\begin{proof}
Let $x \in D_n$ and suppose it can be written as a product of $l$ elements in $S$. Suppose $l$ is odd.  Then $x$ may be written as a product of $l-1$ reflections and a single reflection.  Since $l-1$ is even, the latter product is a rotation.  So $x$ is the product of a rotation and a reflection, which is a reflection.  Now suppose $x$ is not a reflection.  Then it is a rotation.  Since $S$ consists entirely of reflections, if $x \neq e$, then $l$ must be even, otherwise $x$ would be a rotation.  Therefore, by contraposition, if $x$ is a reflection, then it can be written as a product of an odd number of elements in $S$. \\

Hence, $x$ can be written as a product of an odd number of elements in $S$ if and only if it is a reflection.  Similarly, $x$ can be written as a product of an even number of elements in $S$ if and only if it is a rotation.
\end{proof}

Note that this implies that an element cannot be written as a product of both an even and an odd number of letters, as that would imply that the element is both a rotation and a reflection. \\

We will adopt sumset notation following \cite{TaoVu}.  For two subsets $A$ and $B$ of an additive group $(G,+)$, we define $A + B = \{a + b \;|\; a\in A,b\in B\}$ and $A - B = \{a + (-b) \; |\; a\in A,b \in B\}$ where $-b$ denotes the inverse of $b$ in $G$. Further, for subsets $A^{'}$ and $B^{'}$ of a multiplicative group $(G^{'}, \cdot)$, we define $A^{'}B^{'} = \{a\cdot b \; |\; a\in A^{'}, b \in B^{'}\}$.  Unless stated otherwise, addition will take place in a cyclic group and multiplication will take place in a dihedral group. Note that for $l > 0$, $W_l = W_{l-1}S$, since any word expressed as a product of $l$ elements in $S$ is also expressed as a product of $l-1$ elements with a letter from $S$ appended to it, and a word expressed as a product of $l-1$ elements with a letter from $S$ appended to it is also expressed as a product of $l$ elements from $S$.\\ 
 
We can exploit the fact that each $W_l$ exclusively consists of rotations or reflections as follows: Let $F: D_n \rightarrow \mathbb{Z}_n$ be defined by $F(r^i f^k) = i$. Note that $F$ is not one-to-one, as the images of the rotation subgroup and the reflection coset under $F$ are both $\mathbb{Z}_n$. However, since $W_l$ either contains only reflections or only rotations, $|F(W_l)| = |W_l|$ for any $l$, where $F(W_l)$ denotes the image of $W_l$ under $F$. Similarly, since $S= \{f,r^af,r^bf\}$ only contains reflections, $|F(S)| = |S| = 3$, as $F$ simply excises the number of times an element has been `rotated'. From here on, we will denote $F(W_l)$ as $W_l'$ and $F(S)$ as $S'$. Note that $W_{l}' \subseteq W_{l + 1}'$ for any $l \geq 0$ since $0 \in S' = \{0,a,b\}$, so for any element $x \in W_{l}'$, $x + 0 = x \in W_{l+1}'$. \\

\begin{lma}\label{paritysumdiff}
If $l$ is even, then $W_{l+1}' = W_l'+S'$. If $l$ is odd, then $W_{l+1}' = W_l' - S'$ 
\end{lma}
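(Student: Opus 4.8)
The plan is to combine three ingredients: the identity $W_{l+1} = W_l S$, the parity dichotomy (each $W_l$ consists either entirely of rotations or entirely of reflections), and the fact that $F$ behaves as a \emph{twisted} homomorphism on $D_n$ whose sign depends on whether the left factor is a rotation or a reflection.

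First I would record how $F$ interacts with products. Using the defining relation $fr = r^{-1}f$ (equivalently $r^i f \cdot r^j = r^{i-j} f$ and $r^i f \cdot r^j f = r^{i-j}$), a short case check on whether each factor is a rotation or a reflection yields
\begin{align*}
F(xy) = \begin{cases} F(x) + F(y) & \text{if } x \text{ is a rotation},\\ F(x) - F(y) & \text{if } x \text{ is a reflection}. \end{cases}
\end{align*}
In words, $F$ is additive when the left factor is a rotation and subtractive (the right argument is negated in $\mathbb{Z}_n$) when the left factor is a reflection, which is exactly the sign flip produced by pushing $r$ past $f$.

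Next I would invoke $W_{l+1} = W_l S$, so that every element of $W_{l+1}$ has the form $xs$ with $x \in W_l$ and $s \in S$, giving $W_{l+1}' = \{F(xs) : x \in W_l,\ s \in S\}$. I then split on the parity of $l$. When $l$ is even, the parity lemma forces every $x \in W_l$ to be a rotation, so the formula above gives $F(xs) = F(x) + F(s)$ for all such $x$ and all $s \in S$; running over all pairs, the value set $\{F(xs)\}$ is exactly $\{F(x) + F(s)\} = W_l' + S'$. When $l$ is odd, every $x \in W_l$ is a reflection, so $F(xs) = F(x) - F(s)$ and the identical argument produces $W_{l+1}' = W_l' - S'$.

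I do not anticipate a genuine obstacle here; the only point requiring care is the sign bookkeeping in the twisted-homomorphism formula and the observation that, precisely because $W_l$ is homogeneous, a single sign convention applies uniformly to every term of the sumset or difference set. That uniformity is what upgrades the element-wise equality of value sets into the claimed set equality.
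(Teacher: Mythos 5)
Your proof is correct and follows essentially the same route as the paper: both use $W_{l+1} = W_l S$, the parity dichotomy from Lemma 1, and the computation $r^x \cdot r^k f = r^{x+k}f$ versus $r^x f \cdot r^k f = r^{x-k}$ (which you package as a twisted-homomorphism formula for $F$). No gaps.
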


\begin{proof}
By Lemma 1, if $l$ is even, then $W_l$ is a subset of the rotation subgroup. Therefore, every element is of the form $r^x$ for some $0 \leq x < n$. Further, $W_{l+1} = W_lS = \{r^x \cdot r^kf \;|\; r^x\in W_l, r^kf\in S\} = \{r^{x+k}f \;|\; r^x\in W_l, r^kf\in S\}$. Therefore, $W_{l+1}' = \{F(r^{x+k}f) \;|\; x \in W_l', k \in S'\} = \{x+k \;|\; x\in W_l',k\in S'\} = W_l' + S'$. \\

Similarly, by Lemma 1, if $l$ is odd, then $W_l$ is a set of reflections. Therefore, every element is of the form $r^kf$ for some $0 \leq x < n$. $W_{l+1} = W_lS = \{r^xf \cdot r^kf \;|\; r^x\in W_l, r^kf\in S\} = \{r^{x-k} \;|\; r^x\in W_l, r^kf\in S\}$. Therefore, $W_{l+1}' = \{F(r^{x-k}) \;|\; x \in W_l', k \in S'\} = \{x-k \;|\; x\in W_l',k\in S'\} = W_l' - S'$.
\end{proof}

One advantage of examining all of this in the context of sumsets involves the following theorem, which was originally proven by Augustin-Louis Cauchy \cite{cauchy}, and independently discovered by Harold Davenport \cite{dav1}, who was, at the time, unaware of Cauchy's historial contributions in this context \cite{dav2}.  

\begin{thm}[Cauchy-Davenport]
Given a cyclic group $\mathbb{Z}_p$ of prime order $p$ and two subsets $A$ and $B$, $|A+B| \geq \text{min}(p,|A|+|B|-1)$ 
\end{thm}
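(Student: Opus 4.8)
The plan is to split into two regimes according to the size of $|A|+|B|$ relative to $p$, handling the ``saturated'' regime by pigeonhole and the ``unsaturated'' regime by an inductive transform argument. First I would dispose of the case $|A|+|B|>p$: for any target $g\in\mathbb{Z}_p$, the sets $A$ and $g-B=\{g-b : b\in B\}$ have sizes summing to more than $p=|\mathbb{Z}_p|$, so they must intersect; an element of the intersection gives $a=g-b$, i.e. $g=a+b\in A+B$. Hence $A+B=\mathbb{Z}_p$ and $|A+B|=p=\min(p,|A|+|B|-1)$, as required.

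In the remaining case $|A|+|B|\le p$ it suffices to show $|A+B|\ge |A|+|B|-1$, which I would prove by induction on $|B|$ using the Davenport ($e$-)transform. For $e\in\mathbb{Z}_p$ set $A^{(e)}=A\cup(B+e)$ and $B^{(e)}=B\cap(A-e)$. Two elementary identities drive the argument: $A^{(e)}+B^{(e)}\subseteq A+B$ (checked by cases on whether a summand comes from $A$ or from $B+e$), and $|A^{(e)}|+|B^{(e)}|=|A|+|B|$ (from $|A\cup(B+e)|+|A\cap(B+e)|=|A|+|B|$ together with $|B^{(e)}|=|A\cap(B+e)|$). The base case $|B|=1$ is immediate, since then $|A+B|=|A|=|A|+|B|-1$. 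For the inductive step, if I can choose $e$ so that $B^{(e)}$ is nonempty and a proper subset of $B$, then $|B^{(e)}|<|B|$, and applying the inductive hypothesis to the pair $(A^{(e)},B^{(e)})$ gives $|A+B|\ge|A^{(e)}+B^{(e)}|\ge|A^{(e)}|+|B^{(e)}|-1=|A|+|B|-1$.

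The crux, and the step I expect to be the main obstacle, is guaranteeing that such a size-reducing $e$ exists. Nonemptiness of $B^{(e)}$ holds exactly when $e\in A-B$, and properness fails only when $B+e\subseteq A$, so the transform is useless precisely when $B+e\subseteq A$ for every $e\in A-B$. I would show this forces $A=\mathbb{Z}_p$, contradicting $|A|+|B|\le p$ with $|B|\ge 2$: fixing $\beta_0\in B$ and letting $e$ range over $\{\alpha-\beta_0 : \alpha\in A\}\subseteq A-B$ yields $A+(B-\beta_0)\subseteq A$; writing $C=B-\beta_0\ni 0$, this gives $A+C=A$, so every $c\in C$ stabilizes $A$. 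The stabilizer of $A$ is a subgroup of $\mathbb{Z}_p$ containing a nonzero element of $C$ (here primality of $p$ is essential), hence equals all of $\mathbb{Z}_p$, forcing $A=\mathbb{Z}_p$. This contradiction completes the induction. I would also remark that an alternative, slicker route runs through the Combinatorial Nullstellensatz applied to $f(x,y)=\prod_{c\in C}(x+y-c)$ for any set $C\supseteq A+B$ of size $|A|+|B|-2$, where the nonvanishing mod $p$ of the coefficient $\binom{|A|+|B|-2}{|A|-1}$ of $x^{|A|-1}y^{|B|-1}$ again relies on $p$ being prime.
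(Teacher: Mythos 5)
The paper does not actually prove this statement: Cauchy--Davenport is quoted there as a classical result, attributed to Cauchy and Davenport with citations, and is then used as a black box to bound $|W_l|$. So there is no in-paper argument to compare yours against; what you have written is a self-contained proof of the cited theorem. Your argument is correct and is the standard transform proof. The saturated case $|A|+|B|>p$ via the pigeonhole intersection of $A$ with $g-B$ is right; the two identities for the Davenport transform, $A^{(e)}+B^{(e)}\subseteq A+B$ and $|A^{(e)}|+|B^{(e)}|=|A|+|B|$ (the latter via the bijection $x\mapsto x+e$ between $B\cap(A-e)$ and $A\cap(B+e)$), are verified correctly; and the crux --- that if no $e\in A-B$ reduces $B$ then $A+(B-\beta_0)\subseteq A$, so the stabilizer of $A$ contains a nonzero element and, by primality, is all of $\mathbb{Z}_p$, forcing $|A|=p$ and contradicting $|A|+|B|\le p$ with $|B|\ge 2$ --- is exactly where primality enters and is handled properly. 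Note that the induction is really strong induction on $|B|$ (you pass to $B^{(e)}$ of some smaller size, not size $|B|-1$), and one should add the standing hypothesis that $A$ and $B$ are nonempty, since the stated inequality fails for $B=\emptyset$ and $|A|\ge 2$; the paper's statement omits this too. Your closing remark about the Combinatorial Nullstellensatz route via the polynomial $\prod_{c\in C}(x+y-c)$ is also accurate. Given that the surrounding paper leans on additive-combinatorial tools (Cauchy--Davenport here, Kneser's theorem later) without proof, your write-up would serve as a reasonable appendix but is independent of, rather than an alternative to, anything argued in the paper itself.
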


We may invoke this as a tool to bound $W_l$ with a prime modulus.  

\begin{lma}\label{prime}
Let $p$ be prime and $l \geq 0$. If $S = \{f,r^af,r^bf\}$, and $0,a,b,-a,-b,a-b,b-a$ are all distinct modulo $p$, then $|W_l| \geq min(3l,p)$ with respect to $S$. 
\end{lma}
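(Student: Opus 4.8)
The plan is to track the projected sets $W_l' = F(W_l) \subseteq \mathbb{Z}_p$ rather than $W_l$ itself, since $|W_l| = |W_l'|$, and to combine the recursion of Lemma \ref{paritysumdiff} with the Cauchy--Davenport theorem. The first thing to notice is that a single-step application of Cauchy--Davenport is too weak: because $|S'| = 3$, the relation $W_{l+1}' = W_l' \pm S'$ only yields $|W_{l+1}'| \geq \min(p,\,|W_l'| + 2)$, which forces growth of order $2l$, short of the required $3l$. The central idea is therefore to advance \emph{two} steps at a time, where the relevant sumset has size $7$ rather than $3$.

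First I would record the base cases. The distinctness hypothesis includes that $0,a,b$ are distinct modulo $p$, so $W_1' = S' = \{0,a,b\}$ has $|W_1'| = 3$. Applying Lemma \ref{paritysumdiff} again gives $W_2' = S' - S' = \{0, \pm a, \pm b, \pm(a-b)\}$, and the hypothesis that $0,a,b,-a,-b,a-b,b-a$ are all distinct modulo $p$ is precisely the assertion that these seven elements are distinct, so $|W_2'| = 7$ (note that this hypothesis forces $p \geq 7$). These supply the two base cases $|W_1'| = 3 = \min(3,p)$ and $|W_2'| = 7 \geq \min(6,p)$.

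Next I would establish the two-step recursion. Whatever the parity of $l$, iterating Lemma \ref{paritysumdiff} gives $W_{l+2}' = (W_l' \pm S') \mp S'$, and the elementary identities $(A+B)-B = A+(B-B)$ and $(A-B)+B = A+(B-B)$, valid in any abelian group, collapse both cases to $W_{l+2}' = W_l' + (S' - S')$. Since $|S'-S'| = 7$ by the base-case computation, Cauchy--Davenport yields $|W_{l+2}'| \geq \min(p,\,|W_l'| + 6)$. I would then induct in steps of two from $l=1$ and $l=2$: for odd $l = 2k+1$ this gives $|W_l'| \geq 3 + 6k = 3l$, and for even $l = 2k$ it gives $|W_l'| \geq 7 + 6(k-1) = 3l + 1 \geq 3l$. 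The interaction with the $\min$ is handled by the inclusion $W_l' \subseteq W_{l+1}' \subseteq W_{l+2}'$: once $W_l' = \mathbb{Z}_p$ the chain stays saturated, and otherwise $|W_l'| < p$ forces $3l < p$, so the inductive bound propagates cleanly as $\min(p,\,3l+6) = \min(3(l+2),p)$.

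The main obstacle is exactly the factor-of-$3$ versus factor-of-$2$ gap noted at the outset. The resolution is the passage to the difference set $S' - S'$: the distinctness hypothesis is tailored so that $|S'-S'|$ attains its maximal value $7$, which raises the averaged per-step gain from $2$ to $3$ and is what makes the $3l$ bound attainable. The only genuinely delicate point to check carefully is that the two-step sumset identity holds uniformly across both parities and that the base-case overshoot $|W_2'| = 7$ covers the even branch of the induction.
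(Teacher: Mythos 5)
Your proposal is correct and follows essentially the same route as the paper: pass to $W_l'$, observe that the distinctness hypothesis makes $|S'-S'|=7$, collapse two steps of Lemma \ref{paritysumdiff} into $W_{l+2}' = W_l' + (S'-S')$, and apply Cauchy--Davenport to gain $6$ per double step. If anything, your write-up is slightly more complete than the paper's, which only carries out the induction explicitly for odd $l$ and leaves the even branch and the bookkeeping with the $\min$ implicit.
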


\begin{proof}
We first note that $S' - S' = \{0,a,b,a-b,b-a,-a,-b\}$. Since we have assumed these are all distinct modulo $p$, $|S'-S'| = 7$. We will show that $|W_l| \geq min(3l,p)$ for odd $l$ by induction on $l$. First note that $|W_0| = 1 \geq 3*0=0$ since $W_0 = \{r^0\}$ and that $|W_1| = 3 \geq 3*1 = 3$ since $W_1 = S$. Assume that $|W_{l-2}| \geq min(3(l-2),p) = min(3l - 6,p)$, with $l$ an odd number greater than 1. By the Cauchy-Davenport theorem and Lemma \ref{paritysumdiff}, $|W_l| = |W_l'| = |W'_{l-1} +  S| = |W'_{l-2} - S + S| = |W'_{l - 2} - (S - S)| \geq  |W'_{l-2}| + |S-S| - 1 \geq min(3l-6,p) + 7 - 1 = min(3l,p)$. Likewise, if $l$ is even, then $|W_l| = |W_l'| = |W'_{l-1} -  S| = |W'_{l-2} + S - S| \geq  |W'_{l-2}| + |S-S| - 1 \geq min(3l-6,p) + 7 - 1 = min(3l,p)$
\end{proof}

\begin{crl}\label{prime_bound}
Let $p$ be prime. If $S = \{f,r^af,r^bf\}$, and $0,a,b,-a,-b,a-b,b-a$ are all distinct modulo $p$, then $\lambda_1(D_p,S) \leq \lfloor\frac{p}{3}\rfloor+1$ 
\end{crl}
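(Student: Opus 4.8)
The plan is to reduce the quantity $\lambda_1(D_p,S)$ to the largest word length occurring among the reflections of $D_p$, and then to feed in the lower bound $|W_l| \geq \min(3l,p)$ from the preceding lemma. First I would observe that for every $g \in D_p$ and every $s \in S$ the conjugate $gsg^{-1}$ is again a reflection, since conjugation preserves the rotation/reflection dichotomy. Because $p$ is an odd prime, the $p$ reflections of $D_p$ form a single conjugacy class, so as $g$ ranges over $D_p$ the element $gsg^{-1}$ sweeps out \emph{all} reflections (and this is independent of which $s \in S$ is chosen). Hence
\[
\lambda_1(D_p,S) \;=\; \max\{\, l_S(t) : t \text{ is a reflection}\,\},
\]
and the task becomes to bound the maximal word length among reflections.

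Next I would express this maximum through the image sets $W_l'$. By Lemma 1 a reflection $r^c f$ is realized only at odd lengths, so its word length is the least odd $l$ with $c \in W_l'$; combined with the monotonicity $W_l' \subseteq W_{l+1}'$ already noted, the maximal word length among reflections equals the least odd $L$ for which $W_L' = \mathbb{Z}_p$, i.e. for which $|W_L'| = p$. Thus it suffices to locate the first length at which $W_l'$ exhausts $\mathbb{Z}_p$.

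This is exactly where the preceding lemma is applied: since $|W_l| = |W_l'| \geq \min(3l,p)$, the set $W_l'$ must already equal $\mathbb{Z}_p$ once $3l \geq p$, that is, once $l \geq p/3$. As $p \geq 7$ is prime we have $3 \nmid p$, so the least integer at least $p/3$ is $\lceil p/3 \rceil = \lfloor p/3 \rfloor + 1$. Taking $L = \lfloor p/3 \rfloor + 1$ then forces $W_L' = \mathbb{Z}_p$, so every reflection can be written in length at most $\lfloor p/3 \rfloor + 1$, yielding the stated bound.

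The step I expect to be the main obstacle is the parity bookkeeping hidden in the last paragraph: reflections are realized only at odd lengths, whereas the counting threshold $\lceil p/3 \rceil$ is not automatically odd. When $\lceil p/3 \rceil$ is even one cannot immediately declare the reflections covered at that length and must instead pass to the next length via $W_l' \subseteq W_{l+1}'$, which risks costing an additional $+1$. I would therefore analyze $p$ by its residue modulo $6$, checking that the first length reaching all of $\mathbb{Z}_p$ can be taken odd precisely when $\lfloor p/3 \rfloor + 1$ is itself odd, and handling the complementary residue class with care; pinning down this parity alignment, rather than any of the sumset estimates, is the delicate point in landing exactly on $\lfloor p/3 \rfloor + 1$.
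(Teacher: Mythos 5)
Your overall route is the same as the paper's: reduce $\lambda_1$ to the maximal word length of a reflection, observe that reflections are realized only at odd lengths, and combine the bound $|W_l|\geq\min(3l,p)$ with the pigeonhole principle to find the first odd length at which $W_l'$ exhausts $\mathbb{Z}_p$. That reduction is fine, and your remark that for odd $p$ the conjugates of a single reflection sweep out all reflections (so one in fact gets equality, not just an upper bound) is a correct sharpening of what the paper states.

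The genuine gap is exactly the step you defer to the end. The parity alignment you propose to settle by a case analysis modulo $6$ does not come out in your favor, and the paper's own proof passes over the same point with the single clause ``since $l$ must be odd to get reflections.'' If $p\equiv 1\pmod 6$ then $\lceil p/3\rceil=\lfloor p/3\rfloor+1$ is odd and the argument closes. But if $p\equiv 5\pmod 6$, say $p=6k+5$, then $\lceil p/3\rceil=2k+2$ is even; the first odd $l$ with $3l\geq p$ is $2k+3=\lfloor p/3\rfloor+2$, while the lemma only guarantees $|W_{2k+1}|\geq 6k+3<p$. So in that residue class the method delivers $\lfloor p/3\rfloor+2$, not $\lfloor p/3\rfloor+1$. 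Nor is this merely slack in the estimate: for $p=11$ and $S=\{f,rf,r^3f\}$ (which satisfies the distinctness hypothesis, since $\{0,\pm1,\pm3,\pm2\}$ are distinct mod $11$) one computes $W_2'=\{0,1,2,3,8,9,10\}$ and $W_3'=\{0,1,2,3,4,5,6,8,9,10\}$, so $7\notin W_3'$ and $l_S(r^7f)=5$; since $r^7f=r^9fr^{-9}$ is a conjugate of $f$, $\lambda_1(D_{11},S)=5>4=\lfloor 11/3\rfloor+1$. So the case analysis you outline cannot be completed to give the stated bound; what this approach honestly yields is $\lambda_1\leq\lfloor p/3\rfloor+2$ in general, with $\lfloor p/3\rfloor+1$ only when $\lceil p/3\rceil$ is odd.
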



\begin{proof}
First note that since all elements of the form $gsg^{-1}$ are reflections, $\lambda_1$ is less than or equal to the maximum word length of a reflection in $D_p$ using elements of $S$.  Also recall that if $x \in W_k$, then its length is no greater than $k$.  Therefore, it suffices to show that every reflection in $D_p$ is an element of $W_{\lfloor \frac{p}{3}\rfloor}$ or $W_{\lfloor \frac{p}{3}\rfloor + 1}$.  By Lemma \ref{prime}, $|W_l| \geq min(3l,p)$, and since there are $p$ reflections and $W_l$ is composed entirely of reflections for odd $l$, this implies that $W_l$ contains all the reflections in $D_p$ by the pigeonhole principle. This happens when $l > p/3$. Since $l$ must be odd to get reflections, we get the lower bound $\lambda_1 \leq \lfloor\frac{p}{3}\rfloor + 1$
\end{proof}


\section{A bound for all $n$}
We now aim to provide a full proof of Conjecture \ref{3flipconj} for arbitrary $n$. We define the stabilizer of a set $A$ in an abelian group $(G,+)$ to be $Stab(A)=\{g\in G \, | \, g + A = A\}$. This is defined in \cite{TaoVu} as a symmetry group, although we use the term stabilizer to remain consistent with the nomenclature commonly connected to the following, which first appeared in \cite{kneser}. 

\begin{thm} [Kneser's]  
Given an Abelian group $G$ and two sets of elements $A$ and $B$ such that $|A + B| < |A| + |B|$, $|A+B| = |A + \text{Stab(A+B)}| + |B+\text{Stab(A+B)}| - |\text{Stab(A+B)}|$
\end{thm}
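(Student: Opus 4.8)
The plan is to recognize the claimed identity as a repackaging of the classical Kneser inequality, with the hypothesis $|A+B|<|A|+|B|$ contributing only the easy matching bound. Set $H=\text{Stab}(A+B)$. Because $H$ fixes $A+B$, the set $A+B$ is a union of cosets of $H$, and $A+H$, $B+H$ are $H$-periodic by construction; consequently each of $|A+B|$, $|A+H|$, $|B+H|$ is a multiple of $|H|$. Writing $|A+B|=c|H|$, $|A+H|=a|H|$ and $|B+H|=b|H|$, where $a$, $b$, $c$ count the $H$-cosets met by $A$, $B$ and $A+B$ respectively, the right-hand side of the assertion is $|A+H|+|B+H|-|H|=(a+b-1)|H|$, so the theorem is equivalent to the single equation $c=a+b-1$.

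I would obtain the inequality $c\ge a+b-1$ directly from the fundamental Kneser bound $|A+B|\ge |A+H|+|B+H|-|H|$, which holds for every finite nonempty pair $A,B$ with $H=\text{Stab}(A+B)$ and requires no hypothesis. This is the substantive ingredient, and I would cite it from \cite{kneser} (see also \cite{TaoVu}); below I indicate how it is established.

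For the reverse inequality $c\le a+b-1$ I would invoke the hypothesis. Since $A\subseteq A+H$ and $B\subseteq B+H$, we have $|A|\le a|H|$ and $|B|\le b|H|$, whence $c|H|=|A+B|<|A|+|B|\le (a+b)|H|$. Dividing by $|H|$ gives $c<a+b$, i.e. $c\le a+b-1$. Combined with the previous paragraph this forces $c=a+b-1$, which is precisely the stated equality; this direction is routine and is where the hypothesis $|A+B|<|A|+|B|$ is actually used.

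The real work, and the main obstacle, is the Kneser bound itself. I would prove it by induction on $|B|$ driven by the Dyson $e$-transform: for $e\in G$ one replaces $(A,B)$ by $\bigl(A\cup(B+e),\,B\cap(A-e)\bigr)$, a pair that preserves the total size $|A|+|B|$ and whose sumset stays inside $A+B$. Taking $e$ to be a difference of two elements of $B$ keeps the transform nondegenerate and strictly shrinks the second set, so the induction can proceed. The delicate point---the reason Kneser's theorem is genuinely hard rather than a one-line manipulation---is controlling the stabilizer: one must track how $\text{Stab}$ of the successively smaller sumsets behaves and eliminate the periodic configurations in which it enlarges, since only then does the inductive hypothesis apply with the correct value of $|H|$. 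Passing to the quotient $G/H$, where the stabilizer of the image sumset is trivial, isolates this difficulty and is the standard device for completing the argument.
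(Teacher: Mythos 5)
Your proposal is correct, but it is worth noting that the paper does not prove this statement at all: Kneser's theorem is imported as a known result with a citation to \cite{kneser}, and no argument is given in the text. What you have done is (i) correctly reduce the stated equality to the more standard inequality form of Kneser's theorem, and (ii) sketch the standard proof of that inequality. Part (i) is the genuinely useful content here and is fully rigorous: writing $H=\mathrm{Stab}(A+B)$ and counting $H$-cosets, the unconditional bound $|A+B|\ge |A+H|+|B+H|-|H|$ gives $c\ge a+b-1$, while the hypothesis $|A+B|<|A|+|B|$ together with $|A|\le|A+H|$ and $|B|\le|B+H|$ gives $c<a+b$, forcing equality. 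This cleanly explains why the paper's ``equality'' formulation is equivalent to the usual ``inequality'' formulation found in \cite{TaoVu}, something the paper leaves implicit. Part (ii), the Dyson $e$-transform induction, is only a sketch --- in particular the key difficulty you correctly identify (controlling how the stabilizer changes under the transform, and passing to the quotient $G/H$) is named but not carried out --- so as written your argument still ultimately rests on citing \cite{kneser} for the hard inequality, just as the paper does. That is a perfectly reasonable division of labor for a result of this stature; just be explicit that the citation, not the sketch, is what discharges the fundamental bound.
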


Kneser's theorem allows us to bound $|W_l|$ in a useful manner. 

\begin{thm}\label{minBound}
$|W_l| \geq min(2l + 1,n)$ for $l > 0$
\end{thm}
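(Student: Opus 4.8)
The plan is to prove $|W_l| \geq \min(2l+1, n)$ by induction on the odd/even structure of $l$, using the sumset/difference-set translation from Lemma \ref{paritysumdiff} together with Kneser's theorem to replace the Cauchy--Davenport estimate that only worked for prime $n$. First I would set up the induction so that the step advances $l$ by $2$ at a time, matching the pattern in the prime-$n$ lemma: for odd $l$ I would write $W_l' = W_{l-1}' + S' = (W_{l-2}' - S') + S' = W_{l-2}' + (S' - S')$, and analogously handle the even case, so that each inductive step adds a copy of $S'-S'$ (or $S'+S'$) to the previous set. The base cases $|W_1| = 3 \geq \min(3, n)$ and $|W_2| \geq \min(5,n)$ must be checked by hand, the latter requiring a short argument that the relevant sumset has at least five elements whenever $n \geq 5$.

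The core of the argument is the inductive step, where I want to show that each application of the sumset grows $|W_l'|$ by at least $2$ until it saturates at $n$. Here is where Kneser's theorem enters: if the expected growth fails, i.e. if $|A + B| < |A| + |B|$ for the relevant sets $A = W_{l-2}'$ and $B = S' - S'$ (or $S'$), then Kneser forces the sumset to be a union of cosets of a nontrivial stabilizer subgroup $H = \mathrm{Stab}(A+B)$. Since we are in the cyclic group $\mathbb{Z}_n$, the subgroup $H$ has order $d$ for some divisor $d \mid n$ with $d > 1$. I would then argue that once $A+B$ is $H$-periodic, the set is already a union of full cosets of $H$, and any further nontrivial addition either jumps to the next multiple of $d$ (so the bound $2l+1$ is still met because we gain a whole coset of size $d \geq 2$) or we have already reached all of $\mathbb{Z}_n$, giving the $n$ term in the minimum.

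The main obstacle I anticipate is the careful bookkeeping in the Kneser case: one must confirm that the stabilizer subgroup $H$ is genuinely nontrivial (so $d \geq 2$) and that the set $S' - S'$ (respectively $S'$) is not itself trapped inside a single coset of $H$ in a way that stalls growth prematurely. In particular, I would need to verify that since $0 \in S'$ and $S'$ contains at least two distinct residues, the sumsets cannot remain $H$-periodic without steadily absorbing new cosets as $l$ increases, so that $|W_l'|$ advances by at least $d \geq 2$ per step whenever it has not yet saturated. A secondary subtlety is keeping the parity alignment correct: since reflections only occur at odd $l$ and the quantity $\lambda_1$ ultimately concerns reflection word lengths, I must make sure the induction delivers the bound uniformly for the relevant values of $l$ and that the even-$l$ steps (which use $W_l' + S'$ rather than a symmetric difference set) are handled with the same Kneser argument rather than assumed to behave identically to the odd case.
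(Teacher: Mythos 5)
Your overall strategy---induction on $l$, the parity dictionary of Lemma \ref{paritysumdiff}, and Kneser's theorem in place of Cauchy--Davenport---is the same as the paper's, but your implementation has a quantitative gap. The paper advances $l$ by $1$, applying Kneser to $W_{l-1}'$ and $S'$ and showing growth of at least $2$ per step (via a four-way case analysis on whether $a$ and $b$ lie in $\mathrm{Stab}(W_l')$). You instead advance $l$ by $2$, writing $W_l' = W_{l-2}' + (S'-S')$, so each inductive step must gain at least $4$ elements short of saturation; yet your justification in the periodic case only delivers ``a whole coset of size $d\geq 2$,'' i.e.\ growth of $2$. That does not close a double step. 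To repair it you would have to analyze how many cosets of $H=\mathrm{Stab}(A+B)$ the set $S'-S'$ meets: if it meets $\geq 3$ cosets, Kneser gives growth $\geq 2|H|\geq 4$; if it meets exactly $2$, the symmetry of $S'-S'$ forces those two cosets to form an index-$2$ subgroup containing $a$ and $b$, whence $H$ has index $2$ and $|A+B|=n$ already. None of this bookkeeping is needed if you simply take single steps with $B=S'$ (as the paper does), where growth of $|H|\geq 2$, or $|S'|-1=2$ in the trivial-stabilizer case, is exactly what the bound $2l+1$ requires.

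Two smaller but genuine misstatements: first, Kneser's theorem does \emph{not} force $\mathrm{Stab}(A+B)$ to be nontrivial when $|A+B|<|A|+|B|$; the stabilizer may be $\{0\}$, in which case the conclusion is $|A+B|=|A|+|B|-1$. You flag ``confirming $H$ is genuinely nontrivial'' as a step to verify, but it cannot be verified---it can fail---and the trivial case must instead be handled separately (it happens to suffice, since $|A|+|B|-1\geq |A|+2$ for $|B|=3$). Second, to rule out the sumset stalling inside a union of $H$-cosets you need more than ``$S'$ contains at least two distinct residues'': you need that $S'=\{0,a,b\}$ is not contained in any proper subgroup of $\mathbb{Z}_n$, i.e.\ $\gcd(a,b,n)=1$, which is exactly the hypothesis that $S$ generates $D_n$ (this is the paper's Case 1). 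Finally, note that in the even-$l$ double step the relevant set is again $S'-S'$, not $S'+S'$, since $(A+S')-S'=A+(S'-S')$.
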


\begin{proof}
We proceed by induction on $l$. $|W_1| = 3 \geq 2*1 + 1$. Assume that $W_{l-1} \geq min(2(l - 1) + 1,n)$. If $l$ is odd, then $l-1$ is even, so by Lemma \ref{paritysumdiff}, $|W_l| = |W_l'| = |W_{l-1}' + S'|$. Given $S' = \{0,a,b\}$, there are 3 cases based on whether $a$ and $b$ are elements of $\text{Stab}(W_{l}')$\\ \\
\textit{Case 1}: $a,b \in \text{Stab}(W_{l}')$ \\ \\
Since $a,b \in \text{Stab}(W_{l}')$, and $\text{Stab}(W_{l}')$ is a group, $-a$ and $-b$ are also in $\text{Stab}(W_{l}')$. Since moving from $W_k'$ to $W_{k + 1}'$ involves either adding or subtracting $S$, and since every element of $S$ is in $\text{Stab}(W_{l+1}')$ $W_k' = W_l'$ for any $k \geq l$. Since $|W_l'| = |W_l|$, this means that either $|W_l| = n$, or $S$ does not generate $D_n$. 
 \\ \\ \textit{Case 2}: $\text{Stab}(W_{l}') = \{0\}$ \\ \\
By Kneser's theorem, either $|W_l'| \geq |W_{l-1}'| + |S'| \geq 2l - 1 + 3 = 2l+2$, or $|W_l'| = |W_{l-1}' + \text{Stab}(W_{l}')| + |S' + \text{Stab}(W_{l}')| - |\text{Stab}(W_{l}')| = |W_{l-1}'| + |S| - 1 \geq 2l+1$. \\ \\
\textit{Case 3}: Neither $a$ nor $b$ are elements of $\text{Stab}(W_{l}')$, but $\text{Stab}(W_{l}') \neq \{0\}$ \\ \\
Let $c$ be a non-identity element of $\text{Stab}(W_{l}')$. Since $c$ is in the stabilizer, $W_l$ is a collection of cosets of $\langle c \rangle$. Since $c$ is not the identity, each coset of $\langle c \rangle$ has at least 2 elements. \\ \\ Assume that $|W_l'| - |W_{l-1}'| < 2$. (Otherwise $|W_l| = |W_l'| \geq |W_{l-1}'| + 2 = |W_{l-1}|+2 \geq min(2(l - 1) + 1,n) + 2 \geq min(2l + 1,n)$). If $|W_l'| = |W'_{l - 1}|$, then since $W_{l-1}' \subseteq W_{l}'$, $W_l' = W_{l-1}'$. We know that $W'_{l-1}+a \subseteq W_l'$ and $W'_{l-1} \subseteq W'_{l-1} + a$. Therefore since $|W'_l| = |W'_{l+1}|$, $|W'_{l-1}+a| = |W'_{l}|$. This would imply that $W'_{l-1} + a = W'_l$, contradicting the assumption that $a$ is not in the stabilizer. \\ \\ The only sub-case left is if $|W_l'| = |W'_{l-1}|+1$. Because $W'_{l-1} \subseteq W'_l$ and $c \in \text{Stab}(W'_l)$, $W_l'$ is a union of cosets of $\langle c \rangle$ and $W'_{l-1}$ is a union of cosets of $\langle c \rangle$ missing some element $x$. Since $a$ is not a stabilizer of $W'_{l}$, there must be an element of $W'_{l}+a$ not in $W'_l$. However, since $W'_{l} + a = (W'_{l-1} + a)\cup (W'_l/W'_{l-1} + a) = (W'_{l-1} + a)\cup \{x + a\}$, this element has to be $x+a$. Since $x + a \notin W'_{l}$ and $c \in H$, the coset $x + a + \langle c \rangle$ is disjoint from $W'_{l}$. However, since $c$ is not the identity, there is another element $x' \in W_{l-1}$ in the same coset as $x$. Therefore, $x' + a \in W_l$, a contradiction. \\ \\
\textit{Case 4}: $a \in \text{Stab}(W_{l}')$ or $b \in \text{Stab}(W_{l}')$, but not both. \\ \\ Without loss of generality, assume $a \in \text{Stab}(W_{l}')$ and $b$ is not. For the same reason as the previous case, we can assume $|W'_{l}| - |W'_{l-1}| < 2$. If $|W'_{l}| = |W'_{l-1}|$, then since $W'_l \subseteq W'_{l-1}$, $W'_l = W'_{l-1}$. This implies that $W'_{l-1} + b \subseteq W'_{l-1}$ since $W'_{l-1} + a \subseteq W'_{l-1}$ and $W'_l + 0 = W'_l$. Therefore, $b \in \text{Stab}(W'_{l-1}) = \text{Stab}(W'_l)$, a contradiction. If $|W'_{l} - W'_{l - 1}| = 1$, then $W'_{l-1}$ is a collection of cosets of $\langle a \rangle$ missing one element $x$. $W'_{l-1} + b \subseteq W'_l$. Let $x'$ be an element in the same coset as $x$. Since $x' \in W'_l$ and $a \in \text{Stab}(W'_l)$, $x' \in W'_{l+1}$. Since $W'_{l+1} = W_l + \{x\}$ and $W'_{l-1} + b \subseteq W'_l$, $x' + b \in W'_{l}$. Since $x + b$ and $x' + b$ are in the same coset of $\langle a \rangle$ and $a \in \text{Stab}(W'_l)$, $x + b \in W_{l - 1}$. Therefore, either $b \in \text{Stab}(W'_{l})$, a contradiction, or there is an element in $W'_l \backslash W'_{l-1}$ in a different coset than $x$, also a contradiction. Therefore, $|W_l| = |W'_l| \geq min(|W_{l-1}| + 2,n) \geq min(2l+1,n)$
\\ \\
The cases where $l$ is even are the same, but with $a$ replaced by $-a$ and $b$ replaced by $-b$.
\end{proof}
\begin{crl}
For a three-reflection presentation of the dihedral group,
$\lambda_1(D_n,S) \leq \lfloor\frac{n}{2}\rfloor + 1$ 
\end{crl}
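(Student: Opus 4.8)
The plan is to reduce the statement to a question about the word lengths of reflections and then feed Theorem \ref{minBound} into a counting (pigeonhole) argument, exactly as in the prime case but with the sharper obstruction now coming from parity rather than from Cauchy--Davenport. First I would observe that for any $g \in D_n$ and any $s \in S$, the conjugate $gsg^{-1}$ is again a reflection, since $s$ is a reflection and conjugation in a dihedral group preserves the partition into rotations and reflections. Consequently $\lambda_1(D_n,S) = \max_{g,s} l_S(gsg^{-1})$ is bounded above by the largest minimal word length of any reflection of $D_n$, so it suffices to show that every reflection can be written as a word of length at most $\lfloor n/2\rfloor + 1$.

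Next I would combine Lemma 1 with Theorem \ref{minBound}. By Lemma 1, for odd $l$ the set $W_l$ consists entirely of reflections, and there are exactly $n$ reflections in $D_n$. By Theorem \ref{minBound}, $|W_l| \geq \min(2l+1,n)$, so as soon as $l$ is odd and $2l + 1 \geq n$ we obtain $|W_l| \geq n$; since $W_l$ is a set of reflections and there are only $n$ of them, the pigeonhole principle forces $|W_l| = n$, and hence $W_l$ equals the full set of reflections. In particular, for such an $l$ every reflection is expressible as a word of length $l$, so its minimal word length is at most $l$.

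The only remaining task --- and the point that needs care --- is the parity bookkeeping: I must verify that the smallest odd $l$ satisfying $2l + 1 \geq n$ is at most $\lfloor n/2 \rfloor + 1$. I would argue this directly. Taking $l = \lfloor n/2\rfloor + 1$ gives $2l + 1 = 2\lfloor n/2\rfloor + 3 \geq n$, so this value always meets the size condition, and if it is odd we are done. If instead $\lfloor n/2\rfloor + 1$ is even, then $\lfloor n/2\rfloor$ is odd and satisfies $2\lfloor n/2 \rfloor + 1 \geq n$ (using $2\lfloor n/2\rfloor \geq n - 1$), so the odd value $l = \lfloor n/2\rfloor$ already works. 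Either way there is an odd $l \leq \lfloor n/2\rfloor + 1$ for which $W_l$ is the entire set of reflections, which yields the desired bound.

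I expect this parity/floor analysis to be the only genuine obstacle, and it is a mild one, since all of the substantive work is absorbed by Theorem \ref{minBound}. It is worth noting that this final step is precisely where the bound becomes tight: for instance, when $n \equiv 0 \pmod 4$ the smallest admissible odd $l$ is exactly $\lfloor n/2\rfloor + 1$, confirming that $\lfloor n/2 \rfloor + 1$, rather than something smaller, is the correct bound for general $n$.
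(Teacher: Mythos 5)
Your proposal is correct and follows essentially the same route as the paper's own proof: reduce $\lambda_1$ to the maximal word length of a reflection, apply Theorem \ref{minBound} together with Lemma 1 and the pigeonhole principle, and then handle the parity of the candidate length (the paper phrases this via $\lceil \tfrac{n-1}{2}\rceil$, which equals your $\lfloor \tfrac{n}{2}\rfloor$). Your version is if anything slightly more carefully written, since you make explicit both the observation that conjugates $gsg^{-1}$ are reflections and the inequality $2\lfloor n/2\rfloor + 1 \geq n$ underlying the parity step.
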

\begin{proof}
 With justifications similar to those in the proof of Corollary \ref{prime_bound}, it suffices to show that every reflection in $D_n$ is an element of $W_{\lfloor\frac{n}{2}\rfloor + 1}$ or $W_{\lfloor\frac{n}{2}\rfloor}$.  By Theorem \ref{minBound}, for any three-reflection presentation of $D_n$, $|W_{\lceil(n-1)/2\rceil}| \geq min(2*\lceil\frac{n-1}{2}\rceil + 1,n) \geq min(n,n) = n$. If $\frac{n-1}{2}$ is odd, then $W_{(n-1)/2}$ is composed entirely of reflections. Since there are $n$ reflections in $D_n$, every reflection is in $W_{(n-1)/2}$ by the pigeonhole principle. Similarly, if $\lceil\frac{n-1}{2}\rceil$ is even, then since $W_{\lceil(n-1)/2\rceil+1}$ is composed entirely of reflections, and $|W_{\lceil(n-1)/2\rceil+1}| \geq n$, every reflection is in $W_{\lceil(n-1)/2\rceil+1}$ by the pigeonhole principle. Therefore, every reflection in $D_n$ can be written as a product $\lceil\frac{n-1}{2}\rceil+1=\lfloor\frac{n}{2}\rfloor+1$ elements of $S$. (Note that the last equality holds since $n$ is required to be an integer).
\end{proof}

The proof of Theorem \ref{minBound} suggests that the following is likely true. 
\begin{cnj}
The stabilizer of $W_l'$ is always either $\{1\}$ or $C_n$
\end{cnj}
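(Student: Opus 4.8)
The plan is to start from a structural fact that the statement omits but that any proof should rest on: the stabilizers form a nondecreasing chain of subgroups. Indeed, if $g\in\text{Stab}(W_l')$, then using Lemma \ref{paritysumdiff} and the fact that translation distributes over a sumset, $g+W_{l+1}' = g+(W_l'\pm S') = (g+W_l')\pm S' = W_l'\pm S' = W_{l+1}'$, so $g\in\text{Stab}(W_{l+1}')$. Hence $\text{Stab}(W_0')\subseteq\text{Stab}(W_1')\subseteq\cdots$ is an ascending chain in the subgroup lattice of $\mathbb{Z}_n$, beginning at $\text{Stab}(W_0')=\{0\}$ and, since $S$ generates $D_n$ forces $\bigcup_l W_l' = \mathbb{Z}_n$, terminating at $C_n$. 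The conjecture asserts that this chain makes a single jump from $\{0\}$ straight to $C_n$, skipping every proper nontrivial subgroup.

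The natural route is then to rule out proper nontrivial stabilizers directly. Writing $H=\text{Stab}(W_l')$ and assuming $H\neq\{0\}$, it would suffice to prove $a\in H$ and $b\in H$: because $0\in H$ and the generation hypothesis forces $\gcd(a,b,n)=1$, we would obtain $H\supseteq\langle a,b\rangle=\mathbb{Z}_n$, i.e. $H=C_n$. I would attempt $a,b\in H$ by pushing the coset-chasing arguments of Cases 3 and 4 in the proof of Theorem \ref{minBound}, which treat precisely the situation where $H$ is nontrivial while $a$ or $b$ lies outside $H$.

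This is where I expect the main obstacle, and in fact the reason the statement is false as written: those coset-chasing arguments only yield a contradiction in the slow-growth regime $|W_l'|-|W_{l-1}'|<2$; when $W_l'$ grows by two or more elements in a single step, a proper nontrivial subgroup genuinely survives as the stabilizer. A concrete counterexample is $n=8$, $a=1$, $b=4$, which is a valid three-reflection generating set since $f\cdot rf = r^{-1}$ already produces $r$. Here $W_1'=\{0,1,4\}$ and $W_2' = W_1'-S' = \{0,1,3,4,5,7\}$, whose stabilizer is $\langle 4\rangle=\{0,4\}$, a proper nontrivial subgroup of $\mathbb{Z}_8$. Note $b=4=n/2\in H$ while $a=1\notin H$, so this is exactly a Case-4 configuration with $|W_2'|-|W_1'|=3\geq 2$, outside the regime the earlier argument can control. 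The same behavior occurs at odd $l$, so it is not an artifact of the even (rotation) parity: for $n=16$, $a=1$, $b=8$ one computes $\text{Stab}(W_3')=\{0,8\}$.

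These examples also delimit what can be salvaged. Since $\mathbb{Z}_p$ has no proper nontrivial subgroups, the conjecture is trivially true whenever $n$ is prime, consistent with the sharper bounds available in the prime case; every counterexample must therefore have composite $n$, and the failure is driven by a generator exponent lying in a proper subgroup (above, $b=n/2\in\langle n/2\rangle$). The defensible replacement is the chain fact itself: $\text{Stab}(W_l')$ is nondecreasing in $l$ and equals $C_n$ for all sufficiently large $l$, with proper nontrivial stabilizers appearing only transiently. I would prove this corrected version rather than the stated conjecture.
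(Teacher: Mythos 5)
The paper offers no proof of this statement at all---it is posed as a conjecture merely ``suggested'' by the proof of Theorem \ref{minBound}---so there is nothing of the authors' to compare your argument against. What matters is that your analysis is sound and the conjecture is \emph{false} as stated. Your monotonicity observation is correct: translation commutes with the sumset/difference-set operations of Lemma \ref{paritysumdiff}, so $\text{Stab}(W_0')\subseteq\text{Stab}(W_1')\subseteq\cdots$ is an ascending chain starting at $\{0\}$ and terminating at $\mathbb{Z}_n$. And your counterexample checks out: for $n=8$, $S=\{f,rf,r^4f\}$ generates $D_8$ (since $f\cdot rf=r^{-1}$ and $\gcd(1,4,8)=1$), and a direct computation gives $W_2=S\cdot S=\{e,r,r^3,r^4,r^5,r^7\}$, hence $W_2'=\{0,1,3,4,5,7\}$. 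Adding $4$ permutes this set while adding $2$ does not, so $\text{Stab}(W_2')=\{0,4\}$, a proper nontrivial subgroup of $\mathbb{Z}_8$. The $n=16$, $b=8$ example likewise verifies at odd $l$ ($W_3'=\{0,1,2,7,8,9,10,15\}$ is invariant under $+8$), so the phenomenon is not an artifact of the rotation parity. Your diagnosis of \emph{why} Cases 3 and 4 of Theorem \ref{minBound} do not forbid this is also exactly right: those arguments derive contradictions only under the standing assumption $|W_l'|-|W_{l-1}'|<2$, whereas here the growth is $3$, a regime in which the theorem's inequality holds for free and no structural conclusion about the stabilizer follows.

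Your proposed replacement is the correct salvage: the stabilizer chain is nondecreasing, trivial at $l=0$, eventually equal to $\mathbb{Z}_n$, and can pass through proper nontrivial subgroups only transiently and only for composite $n$ (for prime $n$ the conjecture is vacuously true, consistent with Section 2). Note that none of this endangers Theorem \ref{minBound} or its corollary, which never rely on the conjecture. The one point worth tightening in a written version is the generation criterion: you should state explicitly that $S=\{f,r^af,r^bf\}$ generates $D_n$ if and only if $\gcd(a,b,n)=1$, since that is what guarantees both that your examples are legitimate three-reflection generating sets and that the chain terminates at $\mathbb{Z}_n$.
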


\section{Bound Sharpness}
 We now show that the bound is very close to sharp. 

\begin{thm} Given a three-reflection symmetric generating set, $S$, of $D_n$, 
$$\underset{a \in \mathbb{N}}{\max} \; l_S(r^af)) = \begin{cases}
\lfloor\frac{x}{2}\rfloor + 1 & \text{if } x \equiv 0 \:\text{mod} \, 4,\\
  \lfloor\frac{x}{2}\rfloor + 1 & \text{if } x \equiv 1 \:\text{mod} \, 4,\\
 \lfloor\frac{x}{2}\rfloor & \text{if } x \equiv 2 \:\text{mod} \,4, \\
 \lfloor\frac{x}{2}\rfloor& \text{if } x \equiv 3 \:\text{mod} \, 4,
\end{cases}$$
\end{thm}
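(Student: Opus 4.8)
The plan is to translate the quantity $\max_{a}l_S(r^af)$ into a covering problem in $\mathbb{Z}_n$ and then squeeze the least odd length at which every reflection becomes expressible from both sides. First I would record the reduction: by Lemma 1 the reflections are exactly the elements of odd length, and since $0\in S'$ the sets nest as $W_1'\subseteq W_3'\subseteq W_5'\subseteq\cdots$. Hence $l_S(r^af)$ is the least odd $l$ with $a\in W_l'$, and
\[
\max_{a}l_S(r^af)=\min\{\,L\ \text{odd}\ :\ W_L'=\mathbb{Z}_n\,\}.
\]
Writing $L=2j+1$, the theorem becomes the claim that this least odd length equals $2\lfloor n/4\rfloor+1$, which one checks casewise agrees with the four-way formula ($\lfloor n/2\rfloor+1$ when $n\equiv 0,1$ and $\lfloor n/2\rfloor$ when $n\equiv 2,3$, since $2L+1=4\lfloor n/4\rfloor+3$).

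For the upper bound I would invoke Theorem \ref{minBound} directly. Taking $L=2\lfloor n/4\rfloor+1$, one has $2L+1=4\lfloor n/4\rfloor+3\ge n$ in every residue class modulo $4$, so $|W_L'|\ge\min(2L+1,n)=n$ and therefore $W_L'=\mathbb{Z}_n$. As $W_L'$ is then the full reflection coset, every $r^af$ is reachable in at most $L$ letters, giving $\max_a l_S(r^af)\le 2\lfloor n/4\rfloor+1$. This half needs no arithmetic hypothesis on $a,b$ beyond generation, so it holds uniformly.

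The lower bound is the heart of the matter and is where I expect the main obstacle. Here I would first make the growth explicit: iterating Lemma \ref{paritysumdiff} together with the relation $fr^c=r^{-c}f$ yields the alternating-sign description
\[
W_{2j+1}'=(j+1)S'-jS'=\{\,\alpha a+\beta b \bmod n\ :\ (\alpha,\beta)\in R_j\,\},
\]
where $R_j$ is the hexagonal lattice region cut out by $\alpha^{+}+\beta^{+}\le j+1$ and $\alpha^{-}+\beta^{-}\le j$ (with $t^{+}=\max(t,0)$, $t^{-}=\max(-t,0)$). To witness a reflection of length exactly $L$ I must show $W_{L-2}'\ne\mathbb{Z}_n$, i.e. that at $j=\lfloor n/4\rfloor-1$ the image still omits a residue. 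The tool I would use is a reach (diameter) estimate: the integer values $\alpha a+\beta b$ realized over $R_j$ lie in a window equal to the span of $R_j$ in the direction $(a,b)$, and whenever that span is smaller than $n$ the values reduce injectively modulo $n$ and occupy fewer than $n$ residues, producing an explicitly omitted reflection. Carrying this out at $j=\lfloor n/4\rfloor-1$ pins the count at $4\lfloor n/4\rfloor-1<n$ and yields $\max_a l_S(r^af)\ge 2\lfloor n/4\rfloor+1$, matching the upper bound (with the degenerate ranges $n=2,3$ handled by hand, where $W_1'$ already exhausts $\mathbb{Z}_n$).

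The obstacle I anticipate is precisely controlling $|W_{L-2}'|$ from above: unlike the lower bound of Theorem \ref{minBound}, an upper bound on the size of an iterated sum--difference set $(j+1)S'-jS'$ is sensitive to the arithmetic of $a$ and $b$, and the crude reach estimate only forces strict omission when the span of $S'$ is tight. The additive-combinatorial refinement needed to convert the window count into the sharp inequality $|W_{L-2}'|<n$---analyzing the stabilizer of $(j+1)S'-jS'$ via the Kneser machinery already introduced, or isolating the omitted coset directly---is the step I would develop most carefully, since it is exactly this control that forces the least odd covering length up to the asserted value $2\lfloor n/4\rfloor+1$.
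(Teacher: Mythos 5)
Your reduction (the least odd $L$ with $W_L' = \mathbb{Z}_n$) and your upper bound are both correct: Theorem \ref{minBound} gives $|W_L'| \geq \min(2L+1,n) = n$ at $L = 2\lfloor n/4\rfloor + 1$, this holds for every three-reflection generating set, and $2\lfloor n/4\rfloor+1$ does agree with the four-case formula. The genuine gap is the lower bound, and it is not merely the technical obstacle you flag in your final paragraph --- it is unfixable in the generality you are working in. Read universally over all three-reflection generating sets, the claimed equality is false: the paper's own final theorem exhibits $S = \{f, rf, r^{\lfloor\sqrt{n}\rfloor}f\}$, for which every element has word length $O(\sqrt{n})$, so for large $n$ one has $W_{L-2}' = \mathbb{Z}_n$ long before $L-2 = 2\lfloor n/4\rfloor - 1$. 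Consequently no refinement of the Kneser machinery or of your reach estimate for $(j+1)S' - jS'$ can force $|W_{L-2}'| < n$ for arbitrary $a,b$; the size of that iterated sum-difference set genuinely depends on the arithmetic of $a$ and $b$, exactly as you suspected, and for most choices it saturates $\mathbb{Z}_n$ far too early.

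The paper escapes this by proving what is in effect an existence (sharpness) statement, despite the theorem's universal-sounding wording: its proof fixes the specific generating set with $S' = \{-1,0,1\}$ (i.e. $S = \{f, rf, r^{n-1}f\}$) and shows by induction that $W_k' = \{-k,\ldots,k\}$ or $\mathbb{Z}_n$, so that $|W_l'| = \min(2l+1,n)$ holds with \emph{equality}; the answer is then the least odd $l$ with $2l+1 \geq n$, which produces the four residue cases. This reading is the one consistent with the section's stated purpose (``we now show that the bound is very close to sharp''): sharpness needs one witness, not all generating sets. To repair your argument, specialize before attempting the lower bound: for $S' = \{-1,0,1\}$ your description $W_{2j+1}' = (j+1)S' - jS'$ collapses to the interval $\{-(2j+1),\ldots,2j+1\}$, the count $|W_{L-2}'| = 2(L-2)+1 < n$ is immediate, and both bounds meet at $2\lfloor n/4\rfloor + 1$ with no additive-combinatorial input beyond the interval computation.
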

\begin{proof}
First note that $W_1' = \{-1,0,1\}$. We will show by induction that for any $n$ and $k > 1$ $W_k' = \{-k,\ldots,0,\ldots,k\} \:\text{or}\:\mathbb{Z}_n$. Assume that $W_{k-1}' = \{-(k-1),\ldots,0,\ldots,k-1\}\:\text{or}\:\mathbb{Z}_n$. If $W_{k-1}' = \mathbb{Z}_n$, then $W_k' = \mathbb{Z}_n$ since $0 \in S'$. Adding $S = \{-1,0,1\}$ to $W_{k-1}' = $ gives $\{-k,\ldots,0,\ldots,k\}$ To see this, first note that $W'_{k-1} \subseteq W'_{k}$ since $0 \in S'$. $k$ and $-k$ are also in $W_k'$ since $k = k-1 + 1$ and $-k = (-k-1) - 1$. There are no other elements since the absolute value of any element in $W'_{k-1}$ is $k-1$ or less and the absolute value of anything in $S'$ is 1 or less. Since $|W'_{l}| = \text{min}(2l+1,n)$, $W_{l}$ contains every reflection only when $2l+1 \geq n$ and $l$ is odd, which first happens when $$l = \begin{cases}
\lfloor\frac{x}{2}\rfloor + 1 & \text{if } x \equiv 0 \:\text{mod} \, 4,\\
  \lfloor\frac{x}{2}\rfloor + 1 & \text{if } x \equiv 1 \:\text{mod} \, 4,\\
 \lfloor\frac{x}{2}\rfloor & \text{if } x \equiv 2 \:\text{mod} \,4, \\
 \lfloor\frac{x}{2}\rfloor& \text{if } x \equiv 3 \:\text{mod} \, 4,
\end{cases}$$
\end{proof}

We now shift focus to finding presentations where every reflection can be written using as few words as possible. \\
\begin{thm}
Given the Dihedral group $D_n$, there is a 3-reflection presentation where every element has word length $O(\sqrt{n})$
\end{thm}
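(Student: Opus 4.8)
The plan is to exhibit an explicit three-reflection generating set whose associated ``digit'' set $S'=\{0,a,b\}$ lets us reach every residue in $\mathbb{Z}_n$ using only $O(\sqrt n)$ signed copies of $a$ and $b$. Concretely, I would set $m := \lceil \sqrt n\, \rceil$ and take $a=1$, $b=m$, so that $S=\{f, rf, r^m f\}$. Since $rf\cdot f = r$, the pair $\{f, rf\}$ already generates $D_n$, so $S$ is a legitimate symmetric three-reflection generating set whenever $n$ is large enough that $1 \ne m$ as residues; the finitely many small $n$ are covered by the $O$-estimate trivially. The target is to show every element of $D_n$ is reachable in $O(\sqrt n)$ letters for this $S$.

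The core computation is to make the alternating-sum structure of words explicit. Using the relation $fr^k = r^{-k}f$, a word $r^{c_1}f\, r^{c_2}f \cdots r^{c_l}f$ with each $c_j \in S'=\{0,a,b\}$ collapses to $r^{\,c_1 - c_2 + c_3 - \cdots}$ times $f^l$. Hence, consistent with Lemma \ref{paritysumdiff}, the exponent reached by a length-$l$ word is exactly an alternating sum $\sum_{j=1}^{l} (-1)^{j-1} c_j$, with the odd (positive-sign) positions numbering $\lceil l/2\rceil$ and the even (negative-sign) positions numbering $\lfloor l/2\rfloor$. The presence of $0\in S'$ is crucial here: it lets us ``deactivate'' any position, and in particular fill all of the negative positions with $0$, so that every value of the form $xa+yb$ with $x,y\ge 0$ and $x+y\le \lceil l/2\rceil$ lies in $W_l'$.

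I would then run the covering argument. For $a=1$, $b=m$, every $t\in\{0,\dots,n-1\}$ admits the base-$m$ style decomposition $t = x\cdot 1 + y\cdot m$ with $x = t \bmod m \le m-1$ and $y = \lfloor t/m\rfloor$; since $m \ge \sqrt n$ one checks $y \le m-1$, so $x+y < 2m = O(\sqrt n)$. Placing $x$ copies of $1$ and $y$ copies of $m$ in the positive positions and $0$ everywhere else realizes the exponent $t$. Choosing the total length $l$ to be even (for a rotation $r^t$) or odd (for a reflection $r^t f$) with $\lceil l/2\rceil = x+y$ yields a valid word of length $l\le 2(x+y) < 4m = O(\sqrt n)$; letting $t$ range over all of $\mathbb{Z}_n$ then shows every element of $D_n$ has word length $O(\sqrt n)$.

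The main obstacle to watch is the sign alternation forced by $fr^k = r^{-k}f$: one cannot simply append $x+y$ generators, because consecutive letters contribute with opposite signs. The argument above sidesteps this precisely by exploiting $0\in S'$ to neutralize every negative position, so that a purely positive combination $xa+yb$ is still reachable at the cost of at most doubling the length. A secondary point I would verify is that both parities are attainable within the same $O(\sqrt n)$ budget, handled by selecting $l$ of the required parity, so that reflections (odd length) and rotations (even length) are both accounted for and the $O(\sqrt n)$ bound holds uniformly.
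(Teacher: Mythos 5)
Your proposal is correct and follows essentially the same approach as the paper: the same generating set $\{f, rf, r^{\lceil\sqrt n\rceil}f\}$ (the paper uses $\lfloor\sqrt n\rfloor$), the same base-$\sqrt n$ decomposition of the exponent, and the same device of interleaving $f$ (the digit $0$) to cancel the sign alternation, which is exactly what the paper's word $(r^{\lfloor\sqrt{n}\rfloor}ff)^b(rff)^s$ does. Your write-up is in fact more careful than the paper's about the alternating-sum structure and the parity of the word length needed to land on a reflection versus a rotation.
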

\begin{proof}
Let $S = \{f,rf,r^{\lfloor\sqrt{n}\rfloor}f\}$. \\ For any reflection $r^af$, let $b$ and $s$ be the unique constants such that $s < {\lfloor\sqrt{n}\rfloor}$, and $a = b{\lfloor\sqrt{n}\rfloor} + s$. $r^af$ can be written as $(r^{\lfloor\sqrt{n}\rfloor}ff)^b(rff)^s$, so $l_w(r^af) \leq 2r+2b$

\end{proof}
\section{Conclusion}
The approaches adopted in the previous sections provide further insights for minimal word length calculations.  We believe that tools in additive combinatorics can be of further use in analyzing the properties of $\lambda_1$ with different symmetric generating sets and different groups.

\bibliographystyle{elsarticle-num}
\bibliography{refs}
\end{document}